\newtheorem{lemma}{Lemma}[section]
\newtheorem{theorem}[lemma]{Theorem}
\newtheorem{corollary}[lemma]{Corollary}
\newtheorem{example}[lemma]{Example}
\newtheorem{remark}[lemma]{Remark}
\newtheorem{prop}[lemma]{Proposition}
\theoremstyle{definition}
\newtheorem{definition}[lemma]{Definition}
\newcommand{\mr}{\mathrm}
\DeclareMathOperator{\kum}{Kum}
\author{Afsaneh Mehran}
\title{Kummer surfaces associated to (1,2)-polarized abelian surfaces}
\address{Section de Math\'ematiques, Universit\'e de Gen\`eve, 2-4 rue du Li\`evre, Case Postale 64, 1211 Gen\'eve 4, Suisse.\\}
\email{Afsaneh.Mehran@math.unige.ch}
\begin{document}
\maketitle
\begin{abstract}
The aim of this paper is to describe the geometry of the generic
Kummer surface associated to a $(1,2)$-polarized abelian surface.
We show that it is the double cover of a weak del Pezzo surface
and that it inherits from the del Pezzo surface an elliptic
fibration with twelve singular fibers of type $I_2.$
\end{abstract}

\section{Introduction}
The extensive study of Kummer surfaces is explained by their rich
geometry and their multiple roles in the theory of K3 surfaces and
beyond \cite{H1,torelli, B3}.

Let $A$ be an abelian surface and consider the involution which
maps $a$ to $-a$ for any $a$ in $A$. This involution has sixteen
fixed points, namely the sixteen two-torsion points of $A$. The
quotient surface has sixteen ordinary double points and its
minimal resolution is a smooth K3 surface called the Kummer
surface associated to $A$ and denoted by $\mr{Kum}(A)$. Nikulin
proved that any K3 surface containing sixteen disjoint smooth
rational curves is a Kummer surface \cite{N2}.

Given a Kummer surface $\mr{Kum}(A)$, there is a natural way of
constructing new Kummer surfaces from it. One takes the minimal
model of the double cover of $\mr{Kum}(A)$ branched along eight
disjoint smooth rational curves $C_1, \dots, C_8,$ that are even
(see section 2) and that are orthogonal in $\mr{Pic(Kum}(A))$ to
eight other smooth rational curves. We obtain in this way a new
Kummer surface $\mr{Kum}(B)$ together with a rational map
$\mr{Kum}(B) \stackrel{\tau}\dashrightarrow \mr{Kum}(A)$.

In the second section of the paper, we explain this construction
in details and show that the abelian surface associated to the new
Kummer surface $\mr{Kum}(B)$ is isogenous to $A$. In fact we prove
that the map $\tau$ is induced by an isogeny of degree two on the
associated abelian surfaces.

In section 3, we describe the geometry of a generic jacobian
Kummer surface and explain its classical double plane model. We
also recall a theorem of Naruki \cite{Naruki} giving explicit
generators of the N\'eron-Severi lattice of a generic jacobian
Kummer surface.

In section 4, we apply the construction of section 2 to the
generic jacobian Kummer surface. We obtain in this way, fifteen
non isomorphic Kummer surfaces which are associated to
$(1,2)$-polarized abelian surfaces.

Finally in section 5, we show  that the Kummer surfaces of section
4 admit an elliptic fibration with twelve singular fibers of the
type $I_2$. We also prove that these Kummer surfaces are double
cover of a week Del Pezzo surface (i.e. the blowup of $\mathbb
P^2$ at seven points) and that for each of our Kummer surfaces
there exists a decomposition of a very degenerate sextic $\mathcal
S$ (see figure \ref{plane sextic}) into a quartic $Q$ and a conic
$C$ for which we have the theorem

\begin{theorem}\label{San11}
The rational double cover $\mr{Kum}(B) \stackrel{\tau}\dashrightarrow \mr{Kum}(A)$ decomposes as
$$\xymatrix{\mr{Kum}(B) \ar@{-->}[d]^{\tau}\ar[r]^{\varphi}&
T \ar@{->}[d]^{\zeta}\\ \mr{Kum}(A) \ar@{->}[r]^{\phi}& \mathbb P^2}$$ where
$\phi$ is the canonical resolution of the double cover of $\mathbb
P^2$ branched along $\mathcal S$. The maps $\zeta$ and $\varphi$
are the canonical resolutions of the double covers branched along $Q$ and $\zeta^*(C)$ respectively.
\end{theorem}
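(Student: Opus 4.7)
The plan is to exploit the $(\mathbb Z/2\mathbb Z)^2$-structure inherent in a double cover branched along a reducible divisor: since $\mathcal S=Q+C$, the defining equation $w^2=f_Qf_C$ of $\phi$ admits the Klein-four Galois extension $u^2=f_Q$, $v^2=f_C$, $w=uv$. My approach is to exhibit the top of this Klein-four tower (after canonical resolution) as a smooth model of $\mr{Kum}(B)$ and to read off $\varphi$ and $\tau$ as its two non-trivial projections.

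\medskip

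Let $Y$ denote the normalization of the fiber product $\mr{Kum}(A)\times_{\mathbb P^2}T$, and $\widetilde Y$ its canonical desingularization. Then $\widetilde Y$ is a $(\mathbb Z/2)^2$-Galois cover of $\mathbb P^2$ whose three intermediate double covers are $\mr{Kum}(A)$ (branched along $Q+C$), $T$ (branched along $Q$), and a third cover branched along $C$. The projection $\widetilde Y\to T$ is locally given by $v^2=f_C$, hence realizes the double cover of $T$ branched along $\zeta^{*}(C)$; after canonical resolution this is precisely $\varphi$. The square in the statement then commutes by construction, and $\tau$ is the remaining projection $\widetilde Y\to\mr{Kum}(A)$.

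\medskip

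It remains to identify $\widetilde Y$ with $\mr{Kum}(B)$. A local computation shows that $\widetilde Y\to\mr{Kum}(A)$ is \'etale outside the eight $(-2)$-curves $\widetilde F_1,\ldots,\widetilde F_8$ introduced by the canonical resolution of $\phi$ over the transverse nodes $Q\cap C$. Indeed, near a smooth point of the ramification $R_C$ of $\phi$, $f_Q$ is a unit so $u=\pm\sqrt{f_Q}$ splits $Y$ into two trivial sheets (symmetrically near $R_Q$); whereas near a node $p\in Q\cap C$ the $(\mathbb Z/2)^2$-quotient $\mathbb C^2_{(u,v)}\to\mathbb C^2_{(x,y)}$ (via $x=u^2$, $y=v^2$) factors through the $A_1$-singularity $w^2=xy$, and after resolving the $A_1$ the smooth cover extends to a double cover of the resolution branched along the exceptional $(-2)$-curve. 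It follows that $\widetilde Y\to\mr{Kum}(A)$ is a double cover branched along the disjoint union $\widetilde F_1\cup\cdots\cup\widetilde F_8$. Since this branch divisor is the ramification divisor of a double cover, its class is $2$-divisible in $\mr{Pic}(\mr{Kum}(A))$, so the $\widetilde F_i$ form an \emph{even eight} in the sense of section~2; applying the section~2 construction to this even eight identifies $\widetilde Y$ with $\mr{Kum}(B)$ and its projection to $\mr{Kum}(A)$ with $\tau$.

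\medskip

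The main obstacle is the careful bookkeeping of Horikawa's canonical resolution algorithm for $\phi$, which is needed to verify that the branch locus of $\widetilde Y\to\mr{Kum}(A)$ is \emph{exactly} the eight $(-2)$-curves $\widetilde F_i$ (with no additional contributions from other singularities of $\mathcal S$), and that this even eight is the very one whose section~2 double cover produces $\mr{Kum}(B)$. This depends on the specific geometric structure of the very degenerate sextic $\mathcal S$ described in section~4, in particular the transversality of $Q\cap C$.
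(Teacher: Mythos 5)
Your bidouble-cover argument is a genuinely different route from the paper's, and its core is sound. The paper works bottom-up: it constructs $T$ explicitly as the canonical resolution of the double cover of $\mathbb P^2$ branched along $Q$ (showing along the way that $T$ is a weak del Pezzo surface of degree two), then takes the double cover $X$ of $T$ branched along $\zeta^*(C)=E_1+E_2$, checks $K_X=\mathcal O_X$, exhibits sixteen disjoint smooth rational curves on $X$ to conclude it is a Kummer surface, and only at the very end identifies $X$ with $\mr{Kum}(B_{12})$ by composing the two covering involutions into a symplectic involution whose quotient carries the elliptic fibration of Proposition \ref{fibration} and is recognized as $\mr{Kum}(A)$. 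You instead build the $(\mathbb Z/2)^2$-cover as a normalized fiber product, which makes the commutativity of the square automatic and replaces the paper's rather terse final identification by a direct computation of the branch locus of the induced double cover of $\mr{Kum}(A)$: the eight exceptional curves over the eight points of $Q\cap C$ are exactly $E_{13},E_{14},E_{15},E_{16},E_{23},E_{24},E_{25},E_{26}$, i.e.\ the even eight $\Delta_{12}$ of Proposition \ref{fifteen}, and since the Picard group of a K3 surface is torsion free there is a unique double cover branched along a given even eight, so your surface is $\mr{Kum}(B_{12})$ essentially by definition. This is cleaner precisely where the paper is weakest; what it does not deliver is the paper's supplementary geometric output (the del Pezzo structure of $T$, the explicit sixteen disjoint rational curves, and the twelve $I_2$ fibers on $\mr{Kum}(B_{12})$).

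Two repairs are needed before your argument closes. First, your resolved fiber product $\widetilde Y$ is not $\mr{Kum}(B)$ but its blow-up at eight points: over a point $p\in Q\cap C$ the local model of $\widetilde Y$ is the blow-up of $\mathbb C^2_{(u,v)}$ at the origin, whereas the double cover of $T$ branched along $\zeta^*(C)$ is already smooth there (locally $v^2=y$ over $\mathbb C^2_{(u,y)}$, since $p$ is a smooth point of $C$); so $\widetilde Y\to T$ is $\varphi$ composed with eight superfluous blow-ups, and you must contract the eight $(-1)$-curves (this is exactly the blow-down $Z\stackrel{\epsilon}\to X$ of section 2) to obtain $\mr{Kum}(B)$, the morphism $\varphi$ of the statement, and the rational map $\tau$. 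Second, the ``no additional contributions'' check you defer is needed but short: at the six nodes of $Q$ and at the node $p_{12}$ of $C$ the complementary factor ($f_C$, respectively $f_Q$) is a unit, so the relevant intermediate cover splits into two sheets there by the same argument you give at smooth points of the ramification; hence the only branching of $\widetilde Y\to\mr{Kum}(A)$ occurs over $Q\cap C$, as required.
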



\section{Even Eight and Kummer surface}

We will now introduce the notion of an even eight and the double
cover construction associated to it. By applying this construction
to special even eights of a Kummer surface, we obtain new Kummer
surfaces.
\begin{definition}\label{eveneight}
Let $Y$ be a K3 surface, an \textit{even eight} on $Y$ is a set of
eight disjoint smooth rational curves $C_1, \dots, C_8,$ for which
$C_1 + \cdots + C_8 \in 2S_Y.$ Here $S_Y$ denotes the
N\'eron-Severi group of $Y.$
\end{definition}

If $C_1, \dots, C_8,$ is an even eight on a K3 surface $Y$, then
there is a double cover $Z \stackrel{p}\to Y$, branched on $C_1 +
\dots +C_8$. If $E_i$ denotes the inverse image of $C_i$, then
$p^*(C_i)=2E_i$ and $E_i^2=-1$. Hence, we may blowdown the $E_i$'s
to the surface $X$ and obtain the diagram $$\xymatrix{Z
\ar@{->}[d]_{p }\ar[r]^{\epsilon} & X \ar@{-->}^{2:1}[ld]
\\Y & }$$
It turns out that the surface $X$ is again a K3 surface and the
covering involution $\iota: X \to X$ is symplectic with eight
fixed points \cite{N2}.

Suppose now that the K3 surface $Y$ is a Kummer surface, we want
to exhibit natural even eights lying on it. For this purpose, we
recall a central lemma of Nikulin.
\begin{lemma}\cite{N2}\label{nikulin}
Let $Y$ be a Kummer surface and let $E_1, \dots, E_{16} \subset Y$
be sixteen smooth disjoint rational curves. Denote by $I=\{ 1,
\dots, 16 \}$ the set of indices for the curves $E_{i}$'s and by
$Q = \{ M \subset I \vert \quad \frac{1}{2} \sum_{i\in M}E_{i}\in
S_Y\}$; then for every $M$ in $Q$, we have $\# |M|=8 \textrm{ or }
16$ and there exists on $I$ a unique 4-dimensional affine geometry
structure over $\mathbb F_{2}$, whose hyperplanes consist of the
subsets $M \in Q$ containing eight elements.
\end{lemma}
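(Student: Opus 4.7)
\medskip
\noindent\textbf{Plan.} My strategy is to produce enough elements of $Q$ through an explicit geometric construction on the Kummer surface, and then to use lattice-theoretic rigidity to show that these are all of $Q$. Since $Y = \kum(A)$, the sixteen $E_i$ are in canonical bijection with the sixteen $2$-torsion points of $A$, so I first transport the standard affine $\mathbb F_2^4$-structure on $A[2]$ to the index set $I$; this is the candidate affine geometry.

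For the containment direction, I would construct, for each nonzero $t \in A[2]$, an element of $Q$ that is an affine hyperplane. The symmetric $2$-torsion line bundle $L_t \in \mathrm{Pic}^0(A)$ associated to $t$ (via the Weil pairing) carries a canonical $[-1]$-linearization whose weights on the fibres over $A[2]$ split $A[2]$ into a linear hyperplane $\ker\chi_t$ (weight $+1$) and its complement $H_t$ (weight $-1$). Descending $L_t$ through the Kummer construction, with the appropriate blowup correction at the points of $H_t$, produces a class $D_t \in S_Y$ satisfying $2 D_t = \sum_{a \in H_t} E_a$, so $H_t \in Q$. Together with the standard fact $I \in Q$ and the closure of $Q$ under symmetric differences (half-sums add modulo $S_Y$), this yields all $30$ affine hyperplanes inside $Q$.

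For the converse, view $Q$ as an $\mathbb F_2$-linear subspace of $\mathbb F_2^{16}$ under $M \mapsto \chi_M$. Evenness of $S_Y$ gives $(\tfrac12\sum_{i\in M} E_i)^2 = -|M|/2 \in 2\mathbb Z$, forcing $|M| \equiv 0 \pmod 4$. The overlattice of $\bigoplus_i \mathbb Z E_i$ generated by these half-sums has discriminant $2^{16}/4^{\dim_{\mathbb F_2} Q}$ and embeds primitively into the unimodular K3 lattice $U^3 \oplus E_8(-1)^2$. Applying Nikulin's theory of discriminant forms for primitive lattice embeddings pins down $\dim_{\mathbb F_2} Q = 5$ and the weight distribution $\{0,8,16\}$, so $Q$ coincides with the Reed--Muller code $\mathrm{RM}(1,4)$ already produced. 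The eight-element sets in $Q$ are therefore precisely the affine hyperplanes of the affine structure on $I$, and since any affine structure is recoverable from its family of hyperplanes, this structure is unique.

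The main obstacle is the rigidity in the converse. The geometric construction of the $D_t$ is intricate but amounts to essentially routine descent through the quotient $A \to A/\langle -1 \rangle$; by contrast, excluding $4$- and $12$-element codewords and bounding $\dim_{\mathbb F_2} Q$ by $5$ requires the full strength of Nikulin's discriminant-form calculations for primitive embeddings into the K3 lattice.
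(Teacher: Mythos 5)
The paper does not prove this lemma: it is quoted from Nikulin's paper \cite{N2} and used as a black box, so there is no internal argument to compare yours against. Measured against Nikulin's original proof, your outline reproduces its two-pronged structure correctly: first manufacture the thirty weight-eight codewords geometrically, then use the embedding of $\langle E_1,\dots,E_{16}\rangle$ into the unimodular K3 lattice to show the resulting $[16,5]$ code $\mathrm{RM}(1,4)$ exhausts $Q$. The descent step is also sound: twisting $p^*L_t$ on the blowup of $A$ at $A[2]$ by the exceptional curves over the weight $-1$ locus $H_t$ kills the linearization along the fixed locus, the twisted bundle descends to a class $D_t$ on $\kum(A)$, and injectivity of pullback on the torsion-free Picard group gives $2D_t=\sum_{a\in H_t}E_a$, hence $H_t\in Q$; together with $I\in Q$ and closure under symmetric difference this yields all thirty hyperplanes.

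There is, however, a genuine gap exactly at the point you identify as the main obstacle: you attribute the exclusion of weight-$4$ and weight-$12$ codewords, as well as the equality $\dim_{\mathbb F_2}Q=5$, to ``Nikulin's discriminant-form calculations for primitive embeddings.'' Discriminant forms only give the lower bound $\dim Q\ge 5$ (the discriminant group of the primitive closure, of length $16-2\dim Q$, must match that of its rank-$6$ orthogonal complement, whence $16-2\dim Q\le 6$). They cannot exclude weight-$4$ words, because that exclusion uses the geometry, not just the lattice: if $|M|=4$ then $D=\tfrac12\sum_{i\in M}E_i$ has $D^2=-2$, so by Riemann--Roch one of $\pm D$ is effective; $-D$ is not (it meets an ample class negatively), while $D\cdot E_i=-1$ for $i\in M$ forces each irreducible $E_i$ to be a component of $D$, giving $D\ge 2D$, a contradiction. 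Weight $12$ is then removed by complementation with $I\in Q$, and the upper bound $\dim Q\le 5$ is coding theory (the Plotkin bound for a binary $[16,k,8]$ code), as is the identification with $\mathrm{RM}(1,4)$. As written, your rigidity step points at the wrong mechanism and would not go through. A secondary caveat: the lemma is stated for an arbitrary set of sixteen disjoint smooth rational curves on $Y$ (the sixteen tropes of a jacobian Kummer surface also qualify, and they are the exceptional set of a \emph{different} Kummer structure), so the ``canonical bijection'' of $I$ with $A[2]$ is not available at the outset; either restrict to the exceptional curves of the given Kummer structure, which is all this paper uses, or note that the affine structure can be defined directly from the weight-$8$ codewords of $Q$, with uniqueness following because an affine geometry is determined by its family of hyperplanes.
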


The existence of such a 4-dimensional affine geometry implies that
$I \in Q $ or equivalently that $\sum_{i=1}^{16}E_{i}\in 2 S_Y.$
We can proceed exactly as for an even eight and take the double
cover $V \stackrel{p}\to Y$ branched along $E_1+ \dots + E_{16}$.
Again we blowdown the preimage of the $E_i$'s to a surface $A$ and
obtain the diagram $$\xymatrix{V \ar@{->}[d]_{p }\ar[r]^{\epsilon}
& A \ar@{-->}^{\pi_A }[ld]
\\Y & }$$The difference with this diagram and the one above is
that now the surface $A$ is an abelian surface and that the map
$\pi_A$ realizes $Y$ as the Kummer surface associated to $A$. We
point out that by uniqueness, the affine geometry on $I$
corresponds to the one existing on $A_{2}$, the set of 2-torsion
points on $A$. \cite{N2}.

It follows also from the lemma that there exist on $Y$ ($\simeq
\mr{Kum}(A)$) thirty even eights, denoted by $M_1, \cdots,
M_{30}$, i.e. the thirty affine hyperplanes of $I$.

Let $M \in \{ M_1, \dots, M_{30} \}$ be one of these even eights.
We can assume that $M$ consists of the curves $E_{1}, \dots,
E_{8}$. The curves $E_{9}, \dots, E_{16}$ are then orthogonal to
$M$, i.e. $$E_i \cdot E_j =0 \textrm{ if } 1 \leq i \leq 8
\textrm{ and } 9\leq j \leq 16.$$ If $X
\stackrel{\tau}\dashrightarrow \mr{Kum}(A) $ is the double cover
associated to $M$, then the K3 surface $X$ contains again sixteen
disjoint smooth rational curves. Indeed since the curves $E_{9},
\dots, E_{16}$ do not intersect the branch locus of the double
cover $p: Z \to Y$, they split under $p$ and define sixteen
disjoint smooth rational curves on $Z$. These sixteen curves are
then isomorphically mapped by the blowdown $Z \stackrel{\epsilon}
\to X$ to sixteen curves on $X$. It follows that $X$ contains
sixteen disjoint smooth rational curves and hence it is a Kummer
surface.

\begin{prop}\label{prop}
Let $M$ be an even eight on a Kummer surface $\mr{Kum}(A)$ such as
above, then the K3 surface $X$ associated to $M$ is a Kummer
surface. Moreover there is an abelian surface $B$ associated to
$X$ for which we have the commutative diagram
$$\xymatrix{B\ar@{-->}[d]_{\pi_B } \ar[r]^{p} & A
\ar@{-->}[d]_{\pi_A}
\\X=\mr{Kum}(B) \ar@{-->}[r]^{\tau}& \mr{Kum}(A)}$$where $B\stackrel{p}\to
A$ is an isogeny of degree two.
\end{prop}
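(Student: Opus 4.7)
The first assertion of the proposition—that $X$ is a Kummer surface—has already been established in the text preceding the statement: $X$ contains sixteen disjoint smooth rational curves coming from the split preimages of $E_9, \dots, E_{16}$, and by Nikulin's characterization \cite{N2} this forces $X \simeq \mr{Kum}(B)$ for a unique abelian surface $B$. The substantive content is therefore to construct the degree two isogeny $p\colon B \to A$ completing the commutative diagram.

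My plan is to build an étale double cover $\tilde B \to A$ directly from $A$ and the even eight $M$, and then to identify $\tilde B$ with $B$. By Lemma \ref{nikulin}, $M$ corresponds to an affine hyperplane $H \subset A_2$ in the four-dimensional $\mathbb F_2$-affine structure on the $2$-torsion of $A$; let $H_0$ be the linear hyperplane parallel to $H$. Then $H_0 = \ker \chi$ for a nonzero character $\chi\colon A_2 \to \mathbb F_2$, and lifting $\chi$ along the reduction $H_1(A,\mathbb Z) \twoheadrightarrow A_2$ produces a class in $H^1(A,\mathbb F_2)$ classifying a connected étale double cover $\pi\colon \tilde B \to A$. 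Since $\pi$ is étale, $\tilde B$ inherits from $A$ the structure of an abelian surface, and $\pi$ is an isogeny of degree two.

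To identify $\tilde B$ with $B$, the next step is to analyse the rational map $\bar\pi\colon \mr{Kum}(\tilde B) \dashrightarrow \mr{Kum}(A)$ induced by the $(\pm 1)$-equivariance of $\pi$, and to compare it with $\tau$. Using that $\pi|_{\tilde B_2}$ has image $H_0$ and kernel of order two, a local calculation at the nodes of $A/\{\pm 1\}$ should show that the fibre of $\bar\pi$ over the node indexed by $a \in A_2$ consists of two nodes of $\tilde B/\{\pm 1\}$ when $a \in H_0$, and of a single smooth point at which $\bar\pi$ ramifies with index two when $a \notin H_0$. After resolving indeterminacies, $\bar\pi$ becomes a degree two morphism $\mr{Kum}(\tilde B) \to \mr{Kum}(A)$ branched along the eight rational curves indexed by $A_2 \setminus H_0$. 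Post-composing $\pi$ with translation by a suitable $t \in A_2$—an operation commuting with $-1$ and hence descending to a Kummer automorphism of $\mr{Kum}(A)$—swaps the two cosets of $H_0$ and brings the branch divisor to $\sum_{i \in M} E_i$. Since a double cover of $\mr{Kum}(A)$ is determined up to isomorphism by its branch divisor together with a square root of the corresponding line bundle, this cover coincides with the one defining $X$; hence $\mr{Kum}(\tilde B) \simeq X$, giving $\tilde B \simeq B$ together with the commutativity of the diagram.

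The main technical difficulty is that the étale construction above is naturally governed by the linear hyperplane $H_0$, whereas the even eight $M$ is governed by the affine hyperplane $H$, and these may be distinct cosets of $H_0$. Absorbing this discrepancy requires the $2$-torsion translation described above, and one must verify carefully that the resulting Kummer automorphism genuinely intertwines the algebraically constructed branch divisor with the chosen even eight $M$, rather than with its parallel complement.
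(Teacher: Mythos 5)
Your proposal is correct and follows essentially the same route as the paper: both construct the degree-two isogeny as the connected \'etale double cover of $A$ determined by the index-two sublattice of $\Lambda$ (equivalently, the character of $A_2$ whose kernel is the linear hyperplane underlying the even eight), and then identify the induced rational map on Kummer surfaces as the double cover branched exactly along $M$. The differences are cosmetic: the paper normalizes the origin of $A$ to lie in the complementary affine hyperplane and reads off the branch locus as the image of the eight fixed points of the covering involution (translation by a $2$-torsion point), whereas you keep the origin fixed, do the fibre analysis over the nodes directly, and absorb the coset discrepancy by post-composing with a $2$-torsion translation.
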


\begin{proof}
\noindent Since we have already shown that $X$ is a Kummer
surface, we only have to prove that $B$ is degree two isogenous to
$A$. Write the abelian surface $A$ as the complex torus $\mathbb
C^2 / \Lambda$ and let $E_{9}, \dots, E_{16}\subset \mr{Kum}(A)$
be the eight disjoint smooth rational curves orthogonal to $M$.
These curves also form an even eight and hence they correspond to
an affine hyperplane $H$ in $A_2$. Up to translation we can fix
the origin on $A$ in $H$. Let $\frac{[v]}{2}$ be the generator of
$A_2/H$, it defines a sublattice $\Lambda' \subset \Lambda$.
Explicitly we have that $\Lambda'= \mathbb Z h_1 \oplus \mathbb Z
h_2 \oplus \mathbb Z h_3 \oplus \mathbb Z 2v$, where $H=\langle
\frac{[h_1]}{2}, \frac{[h_2]}{2}, \frac{[h_3]}{2} \rangle \subset
A_2$.
The canonical inclusion $\Lambda' \hookrightarrow \Lambda$,
induces the following commutative diagram : $$\xymatrix{\mathbb
C^2/\Lambda'\ar@{-->}[d]_{\pi'} \ar[r]^{p} & \mathbb C^2/\Lambda
\ar@{-->}[d]_{\pi}\\
              \mr {\kum(\mathbb C^2/\Lambda')} \ar@{-->}[r]^{q}& \mr {\kum(\mathbb C^2/\Lambda)}}$$
where $p$ is an isogeny of degree two. The covering involution of
$p$ is given by the translation by the 2-torsion point $[v]$ in
$\mathbb C^2/\Lambda'$.
It induces the symplectic involution on $\mr {\kum(\mathbb
C^2/\Lambda')}$ $$\sigma: \mr {\kum(\mathbb C^2/\Lambda')} \to \mr
{\kum(\mathbb C^2/\Lambda')}$$which has exactly eight fixed points
\cite{N5}, namely
the projection of the sixteen points on $\mathbb C^2/\Lambda'$ satisfying$$[z]+[v]=-[z] \textrm{, or equivalently } 2[z]=[v].$$  
The isogeny $p$ maps the set $\{ [z] \in \mathbb C^2/\Lambda'
\textrm { }| \textrm { }2[z]=[v]\}$ to $\mr {A_2-H}$. In other
words, the affine hyperplane $A_2-H$ corresponds  to the even
eight $M$ in $\mr {Kum}(\mathbb C^2/\Lambda)$.  Hence the
resolution of the rational map $q$ is exactly the double cover of
$\mr{Kum}(A)$ branched along $M$ and the abelian surface $\mathbb
C /\Lambda'$ is $B$.

\end{proof}

\section{Jacobian Kummer surface}
In this section we briefly expose the classical geometry of a
jacobian Kummer surface and its beautiful $16_6$-configuration. We
describe its double plane model and give explicit generators for
its N\'eron-Severi lattice. This description follows a paper of N
aruki \cite{Naruki}.

A Kummer surface $\kum(A)$ is said to be a jacobian Kummer surface
if the surface $A$ is the jacobian of a curve $C$ of genus two.
Moreover, it is a generic jacobian Kummer surface if its Picard
rank is 17.

Recall that the degree two map given by the linear system $|2C|$,
$A \stackrel{|2C|} \to \mathbb P^3$, factors through the
involution $a \stackrel{i} \mapsto -a$, and hence defines an
embedding $A/\{ 1, i\} \hookrightarrow \mathbb P^3$. The image of
this map is a quartic $Y_0 \subset \mathbb P^3$ with sixteen
nodes. Denote by $L_0$ the class of a hyperplane section of $Y_0$.
Projecting $Y_0$ from a node defines a rational map $Y_0
\stackrel{2:1}\dashrightarrow \mathbb P^2$. We blowup the center
of projection

$$\xymatrix{Y_1\ar@{->}[d] \ar[rd] & \\
              Y_0 \ar@{-->}[r]& \mathbb P^2}$$and we call $E_1\subset Y_1$  the
              exceptional divisor and $L_1\subset
              Y_1$ the pullback of a line in $\mathbb P^2$.
              Finally we resolve the remaining fifteen
              singularities of $Y_1$ and obtain the Kummer surface
              $\mr{Kum}(A)$ and a map of degree two
$\mr{Kum}(A) \stackrel{\phi} \to \mathbb P^2$.  The map $\phi$ is
given by the linear system $|L-E_0|$, where $L$ and $E_0$ are the
pullback of $L_1$ and $E_1$ respectively.

The branch locus of the map $\phi$ is a reducible
 plane sextic $\mathcal S$, which is the union of six lines, $l_1, \cdots, l_6$, all tangent to a conic $W$.

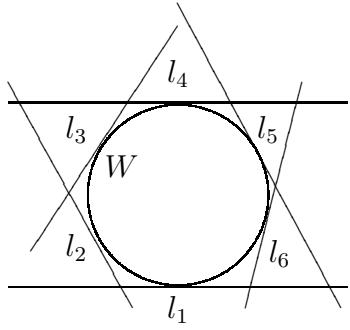
\begin{figure}[h]
$$
  \begin{xy}
   <0cm,0cm>;<1.5cm,0cm>:
    (2,-.3)*++!D\hbox{$l_1$},
    (1.1,0.25)*++!D\hbox{$l_2$},
    (1.1,1.3)*++!D\hbox{$l_3$},
    (2,1.8)*++!D\hbox{$l_4$},
    (2.8,1.3)*++!D\hbox{$l_5$},
    (2.9,0.2)*++!D\hbox{$l_6$},
    (1.5,1)*++!D\hbox{$W$},
    (.5,0.18);(3.5,0.18)**@{-},
    (0.5,2);(1.6,0)**@{-},
    (3.1,2);(2.6,0)**@{-},
    (0.7,.5);(2,2.5)**@{-},
    (0.5,1.82);(3.5,1.82)**@{-},
    (2,2.7);(3.45,0)**@{-},
    (2,1)*\xycircle(.8,.8){},
  \end{xy}
  $$
\caption{The sextic $\mathcal S$\label{plane sextic}}
\end{figure}

\noindent Let $p_{ij}=l_i \cap l_j \in \mathbb P^2$, where $1\leq
i < j \leq 6$. Index the ten $(3,3)$-partitions of the set $\{ 1,
2, \dots ,6 \}$, by the pair $(i,j)$ with $2 \leq i < j \leq 6$.
Each pair $(i,j)$ defines a plane conic $l_{ij}$ passing through
the sixtuplet $p_{1i}, p_{1j}, p_{ij},p_{lm},p_{ln},p_{mn}$, where
$\{ l,m,n \}$ is the complement of $\{ 1,i,j\}$ in $\{ 1, 2, \dots
,6 \}$ and where $l < m < n$. The map $\phi$ factors as
$$\mr{Kum}(A) \stackrel{\tilde{\phi}} \longrightarrow
\tilde{\mathbb P}^2\stackrel{\eta}\longrightarrow \mathbb
P^2$$where $\eta$ is the blowup of $\mathbb P^2$ at the $p_{ij}$'s
and where $\tilde{\phi}$ is the double cover of $\tilde{\mathbb
P}^2$ branched along the strict transform of the plane sextic
$\mathcal S$ in $\tilde{\mathbb P}^2$. Denote by $E_{ij} \subset
\mr{Kum}(A)$ the preimage of the exceptional curves of
$\tilde{\mathbb P}^2$. The ramification of the map $\tilde{\phi}$
consists of the union of six disjoint smooth rational curves $C_0+
C_{12} + C_{13}+ C_{14}+ C_{15}+ C_{16}$. The preimage of the ten
plane conics $l_{ij}$ defines ten more smooth disjoint rational
curves $C_{ij}\subset \mr{Kum}(A), 2 \leq i < j \leq 6$. Finally,
note that $\phi(E_0)=W$. The sixteen curves $E_0, E_{ij} \quad 2
\leq i < j \leq 6$ are called the \textit{nodes} of $ \mr{Kum}(A)$
and the sixteen curves $C_0, C_{ij}$, $2 \leq i < j \leq 6$ are
called the \textit{tropes} of $ \mr{Kum}(A)$. These two sets of
smooth rational curves satisfy a beautiful configuration called
the $16_6$-configuration, i.e. each node intersects exactly six
tropes and vice versa.

\noindent It is now possible to fully describe the N\'eron-Severi
lattice $S_{ \mr{Kum}(A)}$ of a general jacobian Kummer surface.

\begin{theorem}\cite{Naruki} Let $\mr{Kum}(A)$ be a generic jacobian Kummer surface. Its N\'eron-Severi lattice $S_{ \mr{Kum}(A)}$
is generated by the classes of $E_{0}, E_{ij}$, $C_{0}, C_{ij}$
and $L$, with the relations:

\begin{enumerate}
\item $C_{0}= \frac{1}{2}(L- E_{0} -
\sum_{i=2}^{6} E_{1i}),$
\item $C_{1j}= \frac{1}{2}(L - E_{0} - E_{1j}- \cdots - E_{j-1j}- E_{jj+1}- \cdots E_{j6}),$ where $2\leq j \leq 6,$
\item $C_{jk}= \frac{1}{2}(L - E_{1j} -E_{1k}- E_{jk}- E_{lm}-E_{ln}-
E_{mn})$ where $2\leq i < j \leq 6,$ and $\{ l, m, n\}$ are as
described above.

\end{enumerate}

\noindent The intersection pairing is given by:

\begin{enumerate}

\item  the $E_{0}, E_{ij}$ are mutually orthogonal,

\item $\langle L, L \rangle =4, \langle L, E_{0} \rangle =\langle L, E_{ij}\rangle =0,$

\item $\langle E_{0}, E_{0} \rangle= \langle E_{ij}, E_{ij} \rangle=-2$,

\item the $C_{0}, C_{ij}$ are mutually orthogonal,

\item  $\langle L, C_{0} \rangle=  \langle L, C_{ij} \rangle=2$.

\end{enumerate}


\noindent The action on $S_{ \mr{Kum}(A)}$ of the covering
involution $\alpha$ of the map $\phi$ is given by:

\begin{tabular}{lrrlr}

$\alpha (C_{0})=C_{0}$ & &&

$\alpha (C_{1j})=C_{1j}$ & $2 \leq j \leq 6$\\

$\alpha (E_{ij})=E_{ij}$ & $1 \leq i < j \leq 6$ &&

$\alpha (L)=3 \mathrm L - 4 E_{0}$ & \\

$\alpha (E_{0})=2 L - 3E_{0}$& &&

$\alpha (C_{ij})= C_{ij} + L - 2E_{0}$ & $2 \leq i < j \leq 6$.\\

\end{tabular}

\end{theorem}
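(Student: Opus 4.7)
The plan is to derive all four pieces of data — generators, relations, intersection pairing, and involution action — directly from the description of $\kum(A)$ as the resolved double cover $\tilde{\phi}:\kum(A)\to\tilde{\mathbb{P}}^2$ branched along $\tilde l_1+\cdots+\tilde l_6$. Since a generic jacobian Kummer surface has Picard number $17$, the seventeen classes $L,E_0,E_{ij}$ (once shown independent) already give a $\mathbb{Q}$-basis; the tropes will appear either as the ramification locus of $\tilde\phi$ or as one component of a split preimage, and the sublattice thus produced has the correct discriminant to equal $S_{\kum(A)}$, which is the Naruki counting step.

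For the intersection pairing, start from the facts built into the construction: the curves $E_0, E_{ij}$ are pairwise disjoint smooth rational curves, hence mutually orthogonal with self-intersection $-2$. For the pairing with $L$, use $L-E_0=\phi^*(\ell)$. Since $W$ is not in the branch locus, the equality $\phi^*[W]=2(L-E_0)$ forces $\phi|_{E_0}:E_0\to W$ to have degree one: otherwise $E_0=\phi^{-1}(W)$ scheme-theoretically and we would obtain $3E_0=2L$ in $S_{\kum(A)}$, which is impossible. Consequently $\phi_*E_0=[W]$ and the projection formula gives $(L-E_0)\cdot E_0=\ell\cdot[W]=2$, $(L-E_0)\cdot E_{ij}=0$ (because $\eta$ contracts $e_{ij}$), and $(L-E_0)^2=2\,\ell^2=2$. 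Solving these three equations yields $L\cdot E_0=0$, $L\cdot E_{ij}=0$, and $L^2=4$, which is the pairing block.

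For the relations, each strict transform $\tilde l_i=\ell-\sum_{k\neq i}e_{ik}$ lies in the branch locus, so $\tilde{\phi}^*[\tilde l_i]=2C$ for the corresponding ramification curve; combined with $\tilde{\phi}^*(e_{ij})=E_{ij}$ this gives (1) and (2) on taking $i=1$ and $i=j$ respectively. For (3), a direct count shows that each of the six lines $l_k$ meets the conic $l_{ij}$ in precisely the two of the six distinguished points $\{p_{1i},p_{1j},p_{ij},p_{lm},p_{ln},p_{mn}\}$ that happen to lie on $l_k$, so the strict transform $\tilde l_{ij}$ is disjoint from the branch locus of $\tilde\phi$. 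Its preimage therefore splits into two disjoint smooth rational curves $C_{ij}$ and $C_{ij}'$ whose classes sum to $\tilde{\phi}^*[\tilde l_{ij}]=2(L-E_0)-E_{1i}-E_{1j}-E_{ij}-E_{lm}-E_{ln}-E_{mn}$; labelling $C_{ij}$ as the component not involving $E_0$ produces (3). The remaining identities $\langle L,C_\bullet\rangle=2$, mutual orthogonality of the sixteen tropes, and $C_\bullet^2=-2$ now follow mechanically from relations (1)--(3) together with the pairings already computed on $L,E_0,E_{ij}$.

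For the involution action: $\alpha$ fixes the ramification pointwise, so $\alpha(C_0)=C_0$ and $\alpha(C_{1j})=C_{1j}$. Each $E_{ij}$ with $i\neq 0$ is $\alpha$-invariant because $\tilde{\phi}^{-1}(e_{ij})$ is irreducible, realized as a $2{:}1$ cover of $e_{ij}$ branched at the two points $\tilde l_i\cap e_{ij}$ and $\tilde l_j\cap e_{ij}$. The key computation is $\alpha(E_0)$: writing $\phi^{-1}(W)=E_0+\alpha(E_0)$ with both components mapping isomorphically to $W$, the relation $[E_0]+[\alpha(E_0)]=\phi^*[W]=2(L-E_0)$ gives $\alpha(E_0)=2L-3E_0$; $\alpha$-invariance of $L-E_0=\phi^*(\ell)$ then forces $\alpha(L)=3L-4E_0$. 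Finally $\alpha(C_{ij})$ is the other component $C_{ij}'$ of $\tilde{\phi}^{-1}(\tilde l_{ij})$, and subtracting (3) from $[C_{ij}]+[\alpha(C_{ij})]=\tilde{\phi}^*[\tilde l_{ij}]$ yields $\alpha(C_{ij})=C_{ij}+L-2E_0$. The main obstacle throughout is combinatorial rather than conceptual: one must verify carefully the point-incidence pattern of the $16_6$ configuration — which six $p_{\alpha\beta}$'s lie on each conic $\tilde l_{ij}$, and which exactly two of them lie on each line $\tilde l_k$ — and keep consistent notation for which component of a split preimage is labelled $C_{ij}$; once this bookkeeping is settled, every identity reduces to the projection formula.
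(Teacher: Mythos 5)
First, a framing remark: the paper does not prove this theorem at all --- it is quoted from Naruki's paper \cite{Naruki} and used as input --- so there is no internal proof to compare against, and your attempt must be judged as a reconstruction of Naruki's argument. Your derivation of the intersection pairing, of relations (1)--(2), and of the action of $\alpha$ is correct and is essentially the classical one: the projection formula applied to $\phi^{*}(\ell)=L-E_{0}$, the splitting $\phi^{*}[W]=[E_{0}]+[\alpha(E_{0})]=2(L-E_{0})$, and $2C=\tilde{\phi}^{*}[\tilde{l}_{i}]$ for the branch components all check out. The genuine gap is at the central claim of the theorem, the word ``generated''. You write that the sublattice produced by $L,E_{0},E_{ij}$ and the tropes ``has the correct discriminant to equal $S_{\kum(A)}$, which is the Naruki counting step'' --- but this is precisely the assertion that needs proof, and nothing in your argument supplies it. Closing it requires two computations you never make: (i) the discriminant of $S_{\kum(A)}$ itself, which is $2^{6}$ for generic $A$; this does not follow from anything in your setup and needs Nikulin's theorem that $T_{\kum(A)}\cong T_{A}(2)$, together with genericity ($\rho=17$, $\mr{NS}(A)=\mathbb{Z}[C]$ with $[C]^{2}=2$, hence $T_{A}\cong U^{2}\oplus\langle-2\rangle$ and $T_{\kum(A)}\cong U(2)^{2}\oplus\langle-4\rangle$ of discriminant $\pm 2^{6}$); and (ii) the index of $\langle L,E_{0},E_{ij}\rangle$ (discriminant $4\cdot2^{16}=2^{18}$) inside the lattice generated by all thirty-three classes, i.e.\ the $\mathbb{F}_{2}$-dimension of the span of the sixteen half-integral trope vectors modulo the integral span, which is $6$; only then does $2^{18}/(2^{6})^{2}=2^{6}$ match (i) and force your sublattice to be all of $S_{\kum(A)}$. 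Without (i) and (ii) you have only exhibited a finite-index sublattice with the stated Gram matrix, which is strictly weaker than the theorem.

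A second, smaller gap is in relation (3): ``labelling $C_{ij}$ as the component not involving $E_{0}$'' presupposes that one of the two components of $\tilde{\phi}^{-1}(\tilde{l}_{ij})$ has a class with vanishing $E_{0}$-coefficient, and this does not follow from knowing only the sum of the two classes (a priori they could be the stated ones shifted by $\pm v$ for some nonzero $v$). It is fixable: each component is a smooth rational curve mapped isomorphically onto the conic, so its class $x$ satisfies $x^{2}=-2$, $x\cdot E_{\alpha\beta}=1$ for the six blown-up points on $l_{ij}$ and $0$ for the others, and $x\cdot(L-E_{0})=2$; solving in the $\mathbb{Q}$-basis $L,E_{0},E_{ij}$ (legitimate since $\rho=17$) gives exactly two solutions, $\frac{1}{2}(L-S)$ and $\frac{1}{2}(3L-4E_{0}-S)$ with $S$ the sum of the six nodes, and these are exchanged by $\alpha$. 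With that supplement, relation (3), the mutual orthogonality of the tropes, and $\alpha(C_{ij})=C_{ij}+L-2E_{0}$ all follow as you say; but the discriminant step above remains the essential missing piece of the proof.
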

\begin{remark}
The minimal resolution of the double cover of $\mathbb P^2$
branched along the sextic $\mathcal S$ in figure \ref{plane
sextic} is a Kummer surface (see \cite{H1} for a proof).\\
\end{remark}

\section{(1,2)-polarized Kummer surfaces}

In this section, we apply the construction of section 2 to a
generic jacobian Kummer surface. We identify all the even eights
made out of its nodes and study the associated Kummer surfaces.
First we recall some standard facts about the polarization of
abelian varieties.

A polarization on a complex torus $\mathbb C^g/ \Lambda$ is the
class of an ample line bundle $L$ in its the N\'eron-Severi group.
As the latter group is equal, for abelian varieties, to the group
of hermitian forms $H$ on $\mathbb C^g$, satisfying
$E=\mr{Im}H(\Lambda, \Lambda) \subset \mathbb Z$, the ample line
bundle $L$ corresponds to a positive definite hermitian forms
$E_L$. According to the elementary divisor theorem, there exists a
basis $\lambda_1, \dots, \lambda_g, \mu_1, \dots, \mu_g$ of
$\Lambda$, with respect to which $E_L$ is given by the matrix
$$\left(
\begin{array}{cc}0 & D \\ -D & 0\end{array}\right) \textrm{ with   } D=\left(
\begin{array}{cccc} d_1 & 0 & 0 & \ldots \\ 0& d_2 & 0&  \ldots \\ \vdots & 0&  \ddots& 0 \\ \vdots & \vdots & 0 &d_g \end{array}\right)$$
where $d_i \ge 0$ and $d_i | d_{i+1}$ for $i=1, \dots, g-1.$ The
vector $(d_1, d_2, \dots, d_g)$ is the type of the line bundle
$L$.

\begin{example}\cite{BL}\label{example}

\begin{enumerate}
\item If $J(C)$ is the Jacobian of a curve $C$ of genus two,
then
the line bundle associated to the divisor $C$ is a polarization of
type $(1,1)$.

\item If $L$ is a polarization of type $(d_1, \dots, d_g)$ on a complex torus, then
$\chi(L)=d_1 \cdot \cdot  \cdot d_g$.

\item If $X_1 \stackrel{p} \to X_2$ is an isogney of degree 2 of
abelian surfaces and $L$ is a polarization of type $(1,1)$ on
$X_2$, then $\chi(p^*(L))=2 \chi(L)=2\cdot 1$. Hence $p^*(L)$ is a
polarization of type $(1,2)$ on $X_1$.
\end{enumerate}
\end{example}


\begin{prop}\label{fifteen}
Let $\mr{Kum}(A)$ be a generic jacobian Kummer surface and let
$E_0, E_{ij},$ $1\leq i<j\leq 6$ be its sixteen nodes. There exist
fifteen even eights made out of its nodes that do not contain
$E_0$. These even eights are of the form
$$\Delta_{i,j}=E_{1i}+\cdots+ \widehat{E}_{ij}+ \cdots + E_{i6} +
E_{1j}+ \cdots + \widehat{E}_{ij} + \cdots + E_{j6},$$where$\quad
1 \leq i < j \leq 6$ and $E_{11}=0.$

The Kummer surface $\mr{Kum}(B_{ij})$ obtained from the double
cover branched along $\Delta_{ij}$ is associated to an abelian
surface $B_{ij}$ with a (1,2)-polarization.
\end{prop}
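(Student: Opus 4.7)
The plan has three steps: verify that each $\Delta_{ij}$ is an even eight, check that these exhaust the even eights avoiding $E_0$, and then deduce the polarization type of $B_{ij}$ from the isogeny of Proposition \ref{prop} combined with Example \ref{example}.

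For the evenness, the key identity is obtained by summing two trope formulas. Writing $C_{11}:=C_0$ for uniformity, Naruki's theorem gives
\[ 2C_{1k} \;=\; L - E_0 - \sum_{a \neq k} E_{\min(k,a),\max(k,a)}, \qquad 1 \le k \le 6, \]
with the convention $E_{11}=0$. For any $1 \le i<j \le 6$, adding the identities for $k=i$ and $k=j$ doubles the $L$ and $E_0$ contributions; in the combined node sum the node $E_{ij}$ is counted twice (once as $E_{ia}$ with $a=j$, once as $E_{ja}$ with $a=i$), while the remaining four nodes from each sum are disjoint and together form the eight summands of $\Delta_{ij}$. Hence
\[ \Delta_{ij} \;=\; 2\bigl(L - E_0 - E_{ij} - C_{1i} - C_{1j}\bigr) \;\in\; 2\, S_{\mr{Kum}(A)}, \]
so $\Delta_{ij}$ is an even eight. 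The fifteen classes $\Delta_{ij}$ are pairwise distinct because the unordered pair $\{i,j\}$ is recoverable as the two indices that appear four times, rather than twice, among the indices of the eight constituent nodes.

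To see that there are no other even eights avoiding $E_0$, recall from Lemma \ref{nikulin} that the sixteen nodes carry a four-dimensional affine structure over $\mathbb F_2$ whose thirty affine hyperplanes are precisely the thirty even eights on $\mr{Kum}(A)$. These hyperplanes pair up into fifteen parallel (complementary) pairs, and in each such pair $E_0$ lies in exactly one member. Therefore fifteen even eights contain $E_0$ and fifteen do not, and the $\Delta_{ij}$ account for all of the latter. The polarization claim is then a direct consequence of results already in hand: by Proposition \ref{prop} there is a degree-two isogeny $p\colon B_{ij} \to A$; since $A$ is the Jacobian of a genus two curve, Example \ref{example}(1) provides a $(1,1)$-polarization $L_A$ on $A$, and Example \ref{example}(3) shows that $p^{*}(L_A)$ is a polarization of type $(1,2)$ on $B_{ij}$.

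The main obstacle is spotting the trope-sum identity that realizes $\Delta_{ij}$ as an explicit element of $2\, S_{\mr{Kum}(A)}$; once that is in place, the counting via Nikulin's affine geometry and the polarization computation are immediate consequences of the results quoted from earlier in the paper.
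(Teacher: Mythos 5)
Your proof is correct and follows essentially the same route as the paper: the key step in both is summing the two trope relations $2C_{1i}$ and $2C_{1j}$ from Naruki's theorem to exhibit $\Delta_{ij}$ as an element of $2S_{\mr{Kum}(A)}$, and the polarization claim is deduced identically from Proposition \ref{prop} together with Example \ref{example}. Your counting of the even eights avoiding $E_0$ via the fifteen complementary pairs of affine hyperplanes is slightly more explicit than the paper's one-line assertion, but it is the same underlying argument from Nikulin's lemma.
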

\begin{proof}
For any couple $(i,j)$ with $1 \leq i < j \leq 6$, consider the
divisor $2C_{1i} + 2C_{1j}$, where we set $C_{11}:= C_0$.
According to the description of the N\'eron-Severi lattice of a
general jacobian Kummer surface in the previous section, we have
the equality $$2C_{1i} + 2C_{1j}= 2(L-E_0)-(E_{1i}+ \cdots +
E_{ij}+ \cdots + E_{i6} + E_{1j}+ \cdots + E_{ij} + \cdots +
E_{j6}).$$ Therefore $$2C_{1i} + 2C_{1j} - 2(L-E_0)+2 E_{ij}=
E_{1i}+\cdots+ \widehat{E}_{ij}+ \cdots + E_{i6} + E_{1j}+ \cdots
+ \widehat{E}_{ij} + \cdots + E_{j6}$$ and consequently
$$E_{1i}+\cdots+ \widehat{E}_{ij}+ \cdots + E_{i6} + E_{1j}+
\cdots + \widehat{E}_{ij} + \cdots + E_{j6}$$is an even eight not
containing $E_0$. As there are exactly fifteen choices for $i$ and
$j$, we obtain this way all of the possible even eight.

Let $\mr{Kum}(B_{ij})$ be the Kummer surface obtained by taking
the double cover branched along such an even eight. By the
proposition \ref{prop}, the surface $B_{ij}$ is degree two
isogenous to $A$. Since $A$ has a $(1,1)$-polarization, it follows
from the example \ref{example} that $B_{ij}$ has a
$(1,2)$-polarization.
\end{proof}

The reason why we only consider the even eights not containing
$E_0$ is because we would obtain the exact same Kummer surface
whether we take the double cover branched along an even eight or
its complement (see proof of the proposition \ref{prop}).

In the remaining of this section, we will prove that no two of the
Kummer surfaces $\mr {Kum}(B_{ij})$ are isomorphic.

\begin{definition}\label{Nikulin}
The \textit{Nikulin lattice} is an even lattice $N$ of rank eight
generated by $\{ c_i\}_{i=1}^8$ and
$d=\frac{1}{2}\sum_{i=1}^8c_i$, with the bilinear form $c_i \cdot
c_j = - 2\delta_{ij}.$
\end{definition}
\begin{remark}
If $C_1, \dots, C_8$ is an even eight on a K3 surface, then the
primitive sublattice generated by the class of the $C_i$'s in the
N\'eron-Severi group of $X$ is a Nikulin lattice.
\end{remark}

The following proposition gives a condition on two even eights to
give rise to non isomorphic K3 surfaces.

\begin{prop}
Let $Y$ be a Kummer surface and let $\Delta_1$ and $\Delta_2$ be
two even eights on $Y$. Denote by $X_1$ and $X_2$ the respective
double covers of $Y$. If $N_1, N_2 \subset S_Y $ are the two
Nikulin lattices corresponding to $\Delta_1$ and $\Delta_2$, then
$$X_1 \simeq X_2 \iff \exists f \in \mr{Aut}(Y) \textrm{ such that
} f^*(N_1)=N_2.$$
\end{prop}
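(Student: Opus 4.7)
The plan is to prove the two implications separately. A key preliminary fact, needed for the backward direction, is that the only vectors of square $-2$ in the Nikulin lattice $N$ with basis $c_1,\dots,c_8,d$ (where $2d=\sum c_i$) are $\pm c_1,\dots,\pm c_8$. This follows by direct computation using $\langle c_i,c_j\rangle=-2\delta_{ij}$, $\langle c_i,d\rangle=-1$, and $\langle d,d\rangle=-4$: writing an arbitrary vector as $\sum a_i c_i + bd$ with $b\in\{0,1\}$, the equation $(\sum a_ic_i+bd)^2=-2$ forces $b=0$ and exactly one $a_i=\pm 1$.

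For the implication ($\Leftarrow$), suppose $f\in\mathrm{Aut}(Y)$ satisfies $f^*(N_1)=N_2$. The induced isomorphism $f^*|_{N_1}:N_1\to N_2$ sends the sixteen $(-2)$-vectors of $N_1$ bijectively to those of $N_2$, so each $c_i^{(1)}$ is mapped to $\pm c_{\sigma(i)}^{(2)}$ for some permutation $\sigma$. Since $f^*$ is induced by a biholomorphism and each $c_i^{(k)}$ is the class of an effective smooth rational curve, the signs are forced to be positive, and $f^{-1}$ carries the curves of $\Delta_1$ onto those of $\Delta_2$, i.e.\ $f(\Delta_2)=\Delta_1$ as divisors. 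Because $Y$ is simply connected, the line bundle $\mathcal{L}_i$ with $\mathcal{L}_i^{\otimes 2}\cong\mathcal{O}_Y(\Delta_i)$ is unique, so the branched double cover (and hence its minimal model $X_i$) is canonically determined by $\Delta_i$; thus $f$ lifts canonically to an isomorphism $X_2\simeq X_1$.

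For the implication ($\Rightarrow$), let $\phi:X_1\to X_2$ be an isomorphism and let $\iota_i$ denote the symplectic covering involution of $X_i\to Y$. Setting $\iota_2':=\phi^{-1}\iota_2\phi$ gives a second symplectic involution on $X_1$ beside $\iota_1$. If one can produce $\psi\in\mathrm{Aut}(X_1)$ with $\psi\iota_1\psi^{-1}=\iota_2'$, then the composition $\phi\psi$ intertwines $\iota_1$ with $\iota_2$ and descends through the resolved quotients to an automorphism $f:Y\to Y$ swapping the two even eights; the relation $f^*(N_1)=N_2$ then follows (up to replacing $f$ by its inverse). Thus the substance of the argument reduces to showing that two symplectic involutions on the same K3 surface whose resolved quotients are both equal to $Y$ are conjugate inside $\mathrm{Aut}(X_1)$. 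My approach here is via the global Torelli theorem: each $\iota_k$ determines an invariant Hodge sublattice of $H^2(X_1,\mathbb{Z})$ which is naturally identified with the image of $H^2(Y,\mathbb{Z})$ under the quotient map; comparing these two identifications yields a Hodge isometry of $H^2(X_1,\mathbb{Z})$ that, after being arranged to preserve the K\"ahler cone (possibly by composing with reflections in $(-2)$-classes), lifts by Torelli to the required $\psi$. Verifying that this lift exists and is geometric, rather than merely lattice-theoretic, is the most delicate step of the argument.
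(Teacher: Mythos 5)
Your backward implication is complete, and in one respect more explicit than the paper: the classification of the $(-2)$-vectors of the Nikulin lattice (which the paper only carries out later, when distinguishing the fifteen even eights) correctly justifies that $f^*(N_1)=N_2$ forces $f$ to carry the curves of one even eight onto those of the other; after that, the uniqueness of the square root of $\mathcal O_Y(\Delta_i)$ on the simply connected $Y$ is an acceptable substitute for the paper's fiber-product argument.

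The forward implication, however, has a genuine gap at exactly its crux. You correctly reduce everything to showing that the two symplectic involutions $\iota_1$ and $\iota_2'=\phi^{-1}\iota_2\phi$ on $X_1$, both of whose resolved quotients are $Y$, are conjugate in $\mathrm{Aut}(X_1)$ --- but you do not prove this, and the Torelli sketch you offer does not obviously close. Two concrete obstacles: (i) a Hodge isometry between the two $\iota_k$-invariant sublattices of $H^2(X_1,\mathbb Z)$ does not automatically extend to a Hodge isometry of all of $H^2(X_1,\mathbb Z)$ (one must also match the anti-invariant parts and the gluing data in the discriminant groups); and (ii) composing with reflections in $(-2)$-classes to force preservation of the K\"ahler cone will in general destroy the property that the resulting isometry conjugates $\iota_1^*$ into $(\iota_2')^*$, which is the whole point of the construction. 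You yourself flag this step as unverified, so as written the argument is a plan rather than a proof. The paper closes this gap by citing a uniqueness theorem from \cite{AM}: a rational double cover $X\dashrightarrow \mathrm{Kum}(A)$ is determined by an embedding $T_X\hookrightarrow T_A$ of transcendental lattices compatible with the Hodge decomposition, and such an embedding is unique. Consequently $\iota_1$ and $\phi^{-1}\iota_2\phi$ are not merely conjugate but \emph{equal}, so $\phi$ itself descends to an isomorphism of the quotients, carries the eight singular points to the eight singular points, and hence extends over the resolutions to the desired $f$ with $f^*(N_1)=N_2$. Without that input (or a fully executed Torelli argument in its place) the forward direction is not established.
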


\begin{proof}

\noindent We suppose that $X_1$ is isomorphic to $X_2$ and we
denote by $X_2 \stackrel{g}\to X_1$ an isomorphism between $X_2$
and $X_1$. Let $X_1 \stackrel{i_1} \to X_1$ and $X_2
\stackrel{i_2} \to X_2$ be the covering involutions with respect
to the rational double covers $X_1 \stackrel{\tau_1}
\dashrightarrow Y$ and $X_2 \stackrel{\tau_2} \dashrightarrow Y$.

\noindent \textit{Claim:} The following diagram is commutative:
$$\xymatrix{H^2(X_1, \mathbb Z) \ar@{->}[r]^{g^*}\ar[d]^{i^*_1}&
H^2(X_2, \mathbb Z) \ar@{->}[d]^{i^*_2}\\ H^2(X_1, \mathbb Z)
\ar@{->}[r]^{g^*}& H^2(X_2, \mathbb Z).}$$

\noindent \textit{Proof of the claim:} Suppose that the above
diagram does not commute. Then the surface $X_1$ would admit two
distinct symplectic involutions, namely $i_1$ and $g \circ
i_2\circ g^{-1}$. Moreover the quotient of $X_1$ by both of these
involutions would be birational to the same Kummer surface $Y$. In
\cite{AM}, it is shown that the rational double cover of a Kummer
surface $\mr{Kum}(A)$ is determined by an embedding $T_X
\hookrightarrow T_A$ preserving the Hodge decomposition of $T_X$
and $T_A$. Since there is an unique embedding of $T_X$ into $T_A$
which preserves the Hodge decomposition, it follows that
$i_1=g^{-1} \circ i_2\circ g.$

Hence $i_2 \circ g = g \circ i_1$ and the isomorphism $g$ descends
to an isomorphism on the quotients  $$X_2/ i_2 \stackrel{g}\to
X_1/i_1.$$ Since this isomorphism maps the eight singular points
of $X_2/ i_2$ to the eight singular points of $X_1/ i_1$, it
extends to an automorphism $Y \stackrel{f}\to Y$, for which
$f^*(N_1)=N_2.$

Conversely, let $Y \stackrel{f} \to  Y$ be an automorphism of $Y$
for which $ f^*(N_1)=N_2$. Denote by $Z_i \stackrel{p_i}\to Y$ the
double cover of $Y$ branched along the even eight $N_i$ for
$i=1,2$. Consider the fiber product $$\xymatrix{Z_1 \times_Y Y
\ar@{->}[r]^{q}\ar[d]^{p}& Z_1 \ar@{->}[d]^{p_1}\\Y
\ar@{->}[r]^{f}&Y.}$$ The map $Z_1 \times_Y Y \stackrel{p} \to Y$
is a double cover of $Y$ branched along the even set $N_2$ or
equivalently $Z_1 \times_Y Y=Z_2$. Similarly, by considering the
fiber product $$\xymatrix{Z_2 \times_Y Y
\ar@{->}[r]^{h}\ar[d]^{r}& Z_2 \ar@{->}[d]^{p_2}\\Y
\ar@{->}[r]^{f}&Y,}$$ we see that $Z_2 \times_Y Y =Z_1$. The maps
$h$ and $q=h^{-1}$ define an isomorphism between $Z_1$ and $Z_2$
which induces  the required isomorphism between $X_1$ and $X_2$.
\end{proof}

Using the same notation as in the proposition \ref{fifteen}, we
prove the following theorem

\begin{prop}
Let $\Delta_{ij}$ and $\Delta_{i'j'}$ be two even eights defined
as in propostion \ref{fifteen}. $$\mr{Kum}(B_{ij}) \simeq
\mr{Kum}(B_{i'j'}) \Leftrightarrow \{i,j\}=\{i',j'\}.$$
\end{prop}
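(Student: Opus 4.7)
My plan is to reduce the question to a statement about automorphisms of $\mr{Kum}(A)$ via the preceding proposition, and then to control these automorphisms using the genericity of $A = J(C)$. The $(\Leftarrow)$ direction is immediate since $\Delta_{ij}$ depends only on the unordered pair $\{i,j\}$. For the converse, suppose $\mr{Kum}(B_{ij}) \simeq \mr{Kum}(B_{i'j'})$. By the preceding proposition there exists $f \in \mr{Aut}(\mr{Kum}(A))$ with $f^{*}(N_{ij}) = N_{i'j'}$, where $N_{ij} \subset S_{\mr{Kum}(A)}$ denotes the Nikulin sublattice generated by $\Delta_{ij}$.

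The first step will be to observe that the $(-2)$-classes of any Nikulin lattice are exactly the $\pm c_k$; since $f$ is biregular and preserves effectiveness of smooth rational curves, $f$ induces a bijection between the eight nodes of $\Delta_{ij}$ and the eight nodes of $\Delta_{i'j'}$. I would then upgrade this, using uniqueness of the Kummer structure on a generic jacobian Kummer surface, to the assertion that $f$ permutes the full set of sixteen nodes of $\mr{Kum}(A)$. By Nikulin's lemma (\ref{nikulin}), the induced permutation $\bar f$ of the sixteen nodes is an affine automorphism of $A_2 = J(C)_2$; I would write $\bar f = T_v \circ L$ with $L$ linear (fixing $E_0$) and $T_v$ translation by $v \in A_2$, so that $g := T_{-v} \circ f$ is an automorphism of $\mr{Kum}(A)$ fixing $E_0$.

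Next I would exploit the fact that the orthogonal complement in $S_{\mr{Kum}(A)}$ of the sixteen nodes is the rank-one lattice $\mathbb Z L$ (with $L$ primitive since $S_{\mr{Kum}(A)}$ is even and $L^{2}=4$). The isometry $g^{*}$ must then preserve $L$, and by effectiveness actually fix it; thus $g$ preserves the linear system $|L - E_0|$ and descends via $\phi$ to a projective automorphism of $\mathbb P^2$ preserving the sextic $\mathcal S$. For generic $C$ the automorphism group of $(\mathbb P^2, \mathcal S)$ is trivial---any non-trivial element would have to permute the six tangency points $l_k \cap W$ by a $\mr{PGL}_2$-symmetry of $W$, which does not exist for a generic six-tuple---so $g \in \{\mr{id}, \alpha\}$. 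Since $\alpha(E_0) = 2L - 3E_0 \ne E_0$, we conclude $g = \mr{id}$ and hence $f = T_v$ is a pure translation.

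It then remains to analyze which translations can send $\Delta_{ij}$ to $\Delta_{i'j'}$. Writing $\Delta_{ij} = \{x \in A_2 : \omega(x, e_{ij}) = 1\}$ in terms of the Weil pairing $\omega$ and the $2$-torsion point $e_{ij}$ associated to $\Delta_{ij}$ via Nikulin's duality, a direct computation gives $T_v(\Delta_{ij}) = \Delta_{ij}$ when $\omega(v,e_{ij}) = 0$, whereas $T_v(\Delta_{ij})$ is the linear hyperplane $\{x : \omega(x, e_{ij}) = 0\}$ (which contains $E_0$) when $\omega(v, e_{ij}) = 1$. Since $\Delta_{i'j'}$ does not contain $E_0$, only the first case is possible, yielding $\Delta_{i'j'} = \Delta_{ij}$ and thus $\{i,j\} = \{i',j'\}$. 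The hardest step will be the one invoked at the start of the second paragraph: rigorously justifying that $f$ permutes all sixteen nodes (i.e.\ the uniqueness of the Kummer structure on a generic jacobian Kummer surface), which is the essential use of the genericity hypothesis.
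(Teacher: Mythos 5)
Your reduction to an automorphism $f$ of $\mr{Kum}(A)$ with $f^{*}(N_{ij})=N_{i'j'}$, and your computation that the only $(-2)$-classes of a Nikulin lattice are the $\pm c_k$, coincide with the paper's opening moves. But the step you yourself flag as the hardest --- that $f$ must permute the full set of sixteen nodes --- is not merely unproven: it is false as a statement about automorphisms of a generic jacobian Kummer surface, and it fails even for automorphisms that preserve one of the Nikulin sublattices $N_{ij}$. The covering involution $\alpha$ of the double plane model is an explicit counterexample: by Naruki's table it fixes every $E_{kl}$ with $1\le k<l\le 6$, so in particular $\alpha^{*}(N_{12})=N_{12}$, yet $\alpha(E_0)=2L-3E_0$ is not a node, since $\langle 2L-3E_0,\,L\rangle=8$ while every node is orthogonal to $L$. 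There is likewise no ``uniqueness of the Kummer structure'' to invoke: the sixteen tropes already form a second set of sixteen disjoint smooth rational curves on $\mr{Kum}(A)$. Nothing in your argument forces your particular $f$ to send the complementary eight nodes of $\Delta_{12}$ to nodes at all, and everything downstream --- the affine decomposition $\bar f=T_v\circ L$, the descent of $g$ to an automorphism of $(\mathbb P^{2},\mathcal S)$, the reduction to a pure translation --- rests on this premise. So there is a genuine gap, and as structured the argument cannot be completed.

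The paper avoids the issue entirely. After the same Nikulin-lattice observation, it invokes Keum's theorem that \emph{every} automorphism of a generic jacobian Kummer surface acts as $\pm\mathrm{id}$ on the discriminant group $S^{*}_{\mr{Kum}(A)}/S_{\mr{Kum}(A)}$. Applied to the half-integral classes $\frac{1}{2}(E_{13}+E_{14}+E_{23}+E_{24})$ and $\frac{1}{2}(E_{12}+E_{23}+E_{15}+E_{35})$, this shows that the four nodes $f^{*}(E_{13}),\dots,f^{*}(E_{24})$ (which your claim does correctly place inside $\Delta_{ij}$) together with $E_{13},E_{14},E_{23},E_{24}$ form an even set, and the combinatorics of the thirty even eights then forces $\Delta_{ij}=\Delta_{12}$. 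That argument needs no control over where $f$ sends any node outside $\Delta_{12}$. If you want to salvage your strategy, you must replace the ``permutes all sixteen nodes'' step by some lattice-theoretic constraint of this kind; your concluding translation analysis, while correct in itself, is only reached after the false step.
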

\begin{proof}
It is clear that if $\{i,j\}=\{i',j'\}$, then the corresponding
Kummer surfaces are equal. Thus we only have to prove the other
direction. Without loss of generality, we may assume that
$\Delta_{i'j'}=\Delta_{12}$ and we suppose that there exists $f$
an automorphism of $\mr{Kum}(A)$ for which
$f^*(\Delta_{12})=\Delta_{ij}$.

\noindent \textit{Claim}: $$\{
f^*(E_{13}),f^*(E_{14}),f^*(E_{15}),f^*(E_{16}),f^*(E_{23}),f^*(E_{24}),f^*(E_{25}),f^*(E_{26})\}=$$
$$ \{ E_{1i},\cdots, \hat{E_{ij}}, \cdots ,E_{i6},E_{1j},\cdots,
\hat{E_{ij}},\cdots, E_{j6}\}$$

\noindent\textit{Proof of the claim}: Let $N$ be a Nikulin lattice
and let $D \in N$ be a divisor represented by a smooth rational
curve.  Note that since $D$ is an effective reduced divisor and
$N$ is negative definite, then $D^2=-2$. It is therefore
sufficient to show that the only $-2$-classes in $N$ are the
$c_i$'s and the claim will follow. We write $D$ as $D=
\sum_{i=1}^8 \lambda_i c_i+\epsilon d$ where $\lambda_j \in
\mathbb Z$ and $\epsilon=0$ or $1$. If $\epsilon=1$, then the
equality $$D^2=-2\sum_{i=1}^8 \lambda_i^2-2\sum_{i=1}^8
\lambda_i-4=-2$$implies that $\sum_{i=1}^8
\lambda^2_i+\lambda_i=-1.$ Since the latter equation has no
integer solution, we conclude that $\epsilon=0$. Hence
$$D^2=-2\sum_{i=1}^8 \lambda^2_i=-2$$or equivalently,
$\sum_{i=1}^8 \lambda_i^2=1.$ Therefore there exists an unique
$\lambda_k$ for which $\lambda_k=1$ and $\lambda_i=0$ for $i\ne
k$.

In \cite{Keum2}, it is proven that any automorphism of a jacobian
generic Kummer surface induces $\pm \textrm{identity}$ on
$D_{S_{\mr{Kum}(A)}}$ where $D_{S_{\mr{Kum}(A)}}$ is the
discriminant group $S_{\mr{Kum}(A)}^*/S_{\mr{Kum}(A)}$. We want to
apply this fact to the automorphism $f$. We consider the action of
$f^*$ on the following two independent elements of
$D_{S_{\mr{Kum}(A)}}$ $$\frac{1}{2}(E_{13}+E_{14}+E_{23}+E_{24})
\textrm{ and } \frac{1}{2}(E_{12}+E_{23}+E_{15}+E_{35}).$$ From
the claim, we deduce that $$f^*(E_{13}+E_{14}+E_{23}+E_{24})=
E_{i_1i}+E_{i_2i}+E_{j_1j}+E_{j_2j}$$for some classes
$E_{i_1i},E_{i_2i},E_{j_1j},E_{j_2j} \in \Delta_{ij}$.

\noindent From the identity $f^*_{D_{S_{\mr{Kum}(A)}}}=\pm
\textrm{id}_{D_{S_{\mr{Kum}(A)}}},$ we also deduce that
$$f^*(\frac{1}{2}(E_{13}+E_{14}+E_{23}+E_{24}))= \pm
\frac{1}{2}(E_{13}+E_{14}+E_{23}+E_{24}).$$ Putting these two
informations together we find that $$E_{13}+E_{14}+E_{23}+E_{24}+
E_{i_1i}+E_{i_2i}+E_{j_1j}+E_{j_2j} \in 2S_Y.$$Since the only even
eights containing $E_{13}, E_{14},E_{23},E_{24}$ are $\Delta_{12}$
and $\Delta_{34}$, we deduce that $\Delta_{ij}=\Delta_{34}$. We
proceed similarly for $f^*(E_{12}+E_{23}+E_{13}+E_{35})$ and find
that $\Delta_{ij}$ must be equal to $\Delta_{25}$ which yields to
a contradiction.
\end{proof}

\begin{corollary}
The fifteen Kummer surfaces $\mr{Kum}(B_{ij})$ are not isomorphic.
\end{corollary}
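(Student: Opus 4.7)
The corollary is essentially an immediate bookkeeping consequence of the preceding proposition, so the plan reduces to a counting observation. The proposition established the biconditional
\[
\mr{Kum}(B_{ij}) \simeq \mr{Kum}(B_{i'j'}) \iff \{i,j\} = \{i',j'\},
\]
so to obtain the corollary I would simply observe that the fifteen even eights $\Delta_{ij}$ identified in Proposition \ref{fifteen} are indexed by unordered pairs $\{i,j\}$ with $1 \le i < j \le 6$, and there are exactly $\binom{6}{2} = 15$ such pairs, pairwise distinct as unordered sets. Applying the proposition to any two distinct indices $\{i,j\} \neq \{i',j'\}$ then gives $\mr{Kum}(B_{ij}) \not\simeq \mr{Kum}(B_{i'j'})$, proving the corollary.

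In other words, the entire content of the corollary is already contained in the $(\Leftarrow)$ direction of the previous proposition (which is the nontrivial one); the $(\Rightarrow)$ direction is needed to see that the parametrization $\{i,j\} \mapsto \mr{Kum}(B_{ij})$ is injective on isomorphism classes. The real obstacle was the proposition itself, where one had to rule out the existence of an automorphism $f$ of $\mr{Kum}(A)$ sending the Nikulin lattice of $\Delta_{12}$ to that of $\Delta_{ij}$ for any $\{i,j\} \neq \{1,2\}$, using Keum's result that any such $f$ acts as $\pm\mr{id}$ on the discriminant group. Once that is in hand, the corollary is purely combinatorial.

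Thus the proof I would write is a single short paragraph: enumerate the fifteen pairs $\{i,j\}$ with $1 \le i < j \le 6$ and cite the preceding proposition to conclude that distinct pairs yield non-isomorphic Kummer surfaces $\mr{Kum}(B_{ij})$.
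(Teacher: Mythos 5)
Your proof is correct and matches the paper, which states the corollary without proof as an immediate consequence of the preceding proposition and the count $\binom{6}{2}=15$. One small slip: the nontrivial direction you actually need is $(\Rightarrow)$ (isomorphic implies equal index sets), not $(\Leftarrow)$ as your second paragraph first labels it, though you then correctly identify $(\Rightarrow)$ as what gives injectivity on isomorphism classes.
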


\section{Elliptic Fibration and weak del Pezzo surface}

In this section, we provide an alternate description of the Kummer
surfaces $\mr{Kum}(B_{ij})$ as the double covers of a weak del
Pezzo surface. We relate this construction to the projective
double plane model of the generic jacobian Kummer surface of
section 3. First we note the existence on $\mr{Kum}(B_{ij})$ of an
elliptic fibration that will be useful later. For simplicity, we
will always argue for the Kummer surface $\mr{Kum}(B_{12})$.

\begin{prop}\label{fibration}
Let $\mr{Kum}(B_{12})$ be the Kummer surface constructed in the
proposition \ref{fifteen}. The surface $\mr{Kum}(B_{12})$ admits a
Weierstrass elliptic fibration with exactly twelve singular fibers
of the type $I_2$.
\end{prop}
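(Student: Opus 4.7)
The plan is to exhibit the elliptic fibration on $\mr{Kum}(B_{12})$ as the pullback via the rational double cover $\tau$ of a carefully chosen genus-one pencil on $\mr{Kum}(A)$. A natural candidate is the class $D = L - E_0 - E_{12}$, which satisfies $D^2 = 4 - 2 - 2 = 0$; via the double plane map $\phi : \mr{Kum}(A) \to \mathbb{P}^2$, the pencil $|D|$ is the preimage of the pencil of lines through $p_{12} = l_1 \cap l_2$. I would first enumerate its reducible members. The two lines $l_1, l_2$ of $\mathcal{S}$ that pass through $p_{12}$ give $I_0^*$ fibers, with central tropes $C_0$ and $C_{12}$ and ``tails'' $\{E_{13}, E_{14}, E_{15}, E_{16}\}$ and $\{E_{23}, E_{24}, E_{25}, E_{26}\}$ respectively. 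The six lines joining $p_{12}$ to each $p_{kl}$ with $\{k,l\} \subset \{3,4,5,6\}$ each give an $I_2$ fiber of the form $N_{kl} + E_{kl}$, where $N_{kl}$ is the preimage under $\phi$ of the strict transform of the line. The Euler count $2 \cdot 6 + 6 \cdot 2 = 24$ confirms that these are all the singular fibers of $|D|$ on $\mr{Kum}(A)$.

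The crucial observation is that the eight tails of the two $I_0^*$ fibers are exactly the even eight $\Delta_{12}$ used to define $\tau$. Consequently, under pullback: each $I_0^*$ fiber has its four tails in the branch locus, so their preimages on the intermediate surface $Z$ are $(-1)$-curves that get contracted in the blowdown to $\mr{Kum}(B_{12})$; the central trope is not in the branch, and its double cover is ramified at the four tail intersections, yielding a smooth genus-one curve which descends to a smooth elliptic fiber on $\mr{Kum}(B_{12})$. For each of the six $I_2$ fibers $N_{kl} + E_{kl}$, neither component lies in $\Delta_{12}$, so each lifts to a disjoint pair of $\mathbb{P}^1$'s. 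A monodromy argument --- using that both $N_{kl}$ and $E_{kl}$ are disjoint from $\Delta_{12}$, so the sheet labels of $\tau$ can be transported consistently along each rational component --- shows that the four intersection points of the preimages distribute as $2+2$, splitting the a priori $I_4$-cycle into two disjoint $I_2$ configurations. This produces $6 \times 2 = 12$ fibers of type $I_2$ on $\mr{Kum}(B_{12})$, and the Euler count $12 \cdot 2 = 24 = e(\mr{Kum}(B_{12}))$ confirms there are no others.

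To promote this to a Weierstrass fibration I need a section. The class $E_{12}$ on $\mr{Kum}(A)$ satisfies $E_{12} \cdot D = 2$, so $E_{12}$ is a $2$-section of $|D|$; since $E_{12} \not\subset \Delta_{12}$, its preimage under $\tau$ consists of two disjoint $(-2)$-curves on $\mr{Kum}(B_{12})$, each meeting a generic fiber transversely in a single point, hence each is a section. The main technical obstacle I foresee is precisely the splitting claim above --- ruling out an $I_4$-cycle in favor of two disjoint $I_2$ fibers --- which requires a careful local analysis of the double cover near the two transverse intersection points of $N_{kl}$ and $E_{kl}$, to be carried out by trivializing the cover along each simply connected rational component and checking that the identifications at the two nodes respect the sheet labels.
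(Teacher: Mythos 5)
Your overall route is the same as the paper's: you take the pencil $|L-E_0-E_{12}|$ of lines through $p_{12}$, identify its two $I_0^*$ fibers (with central tropes $C_0$, $C_{12}$) and its six $I_2$ fibers, observe that $\Delta_{12}$ consists exactly of the multiplicity-one components of the two $I_0^*$ fibers, and pull everything back along $\tau$. Your analysis of the $I_0^*$ fibers (tails become contracted $(-1)$-curves, the central trope acquires a connected double cover branched at its four intersection points with the tails, hence a smooth elliptic fiber) is correct and in fact more explicit than the paper's, and your section argument via the split $2$-section $E_{12}$ is a legitimate alternative to the paper's use of the tropes $C_{13},\dots,C_{16}$, which are honest sections of $|F|$ downstairs.

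The one genuine gap is exactly the point you flag: ruling out a single $I_4$ in favour of two disjoint $I_2$'s over each of the six points. The local analysis you propose cannot settle this: after trivializing the cover over the simply connected components $N_{kl}$ and $E_{kl}$, the relative identification of sheets at the second node is precisely the monodromy of the double cover around the cycle of the $I_2$, a global datum invisible from a neighbourhood of the two nodes; and the Euler count does not help, since six $I_4$'s and twelve $I_2$'s both contribute $24$. (The paper, for what it is worth, also asserts the splitting without argument.) A clean way to close the gap is global rather than local: since $\tau^*F_1=2\bar{C}_0$ with $\bar{C}_0$ a smooth elliptic curve and an elliptic K3 surface admits no multiple fibers, $\tau^*F$ cannot be the fiber class; hence the general $\tau^{-1}(F_t)$ is disconnected, and the Stein factorization of $\mr{Kum}(B_{12})\to\mathbb{P}^1$ is a degree-two cover $\mathbb{P}^1\to\mathbb{P}^1$ branched exactly where $\tau^{-1}(F_t)$ is connected. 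By Hurwitz this cover has exactly two branch points, which must be the images of $F_1$ and $F_2$; therefore each $\tau^{-1}(F_i)$, $3\le i\le 8$, is a disjoint union of two fibers, and since no single $(-2)$-curve can be a full fiber, each of these is a connected two-component configuration with each component meeting the other twice, i.e.\ an $I_2$. With this replacement for your monodromy step, your proof is complete.
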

\begin{proof}
Let $\mr{Kum}(A) \stackrel{\phi} \to \mathbb P^2$ be the double
plane model of the generic jacobian Kummer surface introduced in
section 3. Consider the pencil of lines passing through the point
$p_{12}$ in $\mathbb P^2$. Its preimage in $\mr{Kum}(A)$ defines
an elliptic fibration, given by the divisor class
$F=L-E_0-E_{12}.$ The divisors $$F_1=E_{15}+ E_{16}+2C_{0}+
E_{13}+ E_{14}, \quad \textrm{and} \quad F_2=E_{25}+
E_{26}+2C_{12}+ E_{23}+ E_{24}$$ define two fibers of type $I^*_0$
of this fibration. Moreover, the six divisors
\begin{center}
$F_3= L-E_0-E_{12}-E_{45}+E_{45}$,

$F_4= L-E_0-E_{12}-E_{46}+E_{46}$,

$F_5= L-E_0-E_{12}-E_{35}+E_{35}$,

$F_6= L-E_0-E_{12}-E_{36}+E_{36}$,

$F_7= L-E_0-E_{12}-E_{34}+E_{34}$,

$F_8= L-E_0-E_{12}-E_{56}+E_{56}$
\end{center}
\noindent define six $I_2$ fibers. Since the Euler characteristics
of the $F_i$'s add up to 24, which is equal to the Euler
characteristic of a K3 surface, we conclude by Shioda's formula
\cite{Shioda} that the $F_i$'s are the only singular fibers of the
elliptic fibration defined by the linear system $|F|$. Note also
that the curves $C_{13}$, $C_{14}$, $C_{15}$ and $C_{16}$ are
sections of this fibration.

We now analyze the induced fibration $\tau^*F$ on
$\mr{Kum}(B_{12})$, where
$\mr{Kum}(B_{12})\stackrel{\tau}\dashrightarrow \mr{Kum}(A)$ is
the rational double cover defined by the even eight $\Delta_{12}.$
We remark that the even eight $\Delta_{12}$ satisfies
$$\Delta_{12} = F_1 +F_2 -2(C_0 +C_{12})$$ which mean that the
eight components of $\Delta_{12}$ are exactly the eight components
of the fibers $F_1$ and $F_{2}$ that appear with multiplicity one.
Hence $\tau^*F_1$ and $\tau^*F_2$ are just smooth elliptic curves.
However the six fibers $F_3, \dots, F_8$ split under the cover and
define twelve $I_2$ fibers of the elliptic fibration on
$\mr{Kum}(B_{12})$ defined by $\tau^*F$. Again a computation of
Euler characteristics shows that these twelve $I_2$ fibers are the
only singular fibers of the linear system $|\tau^*F|$. Also the
sections $C_{13}$, $C_{14}$, $C_{15}$ and $C_{16}$ of $|F|$ pull
back to sections of $\tau^*F$, which is therefore a Weierstrass
elliptic fibration.

\end{proof}

We now proceed to the realization of the surface
$\mr{Kum}(B_{12})$ as a double cover of a weak del Pezzo surface.
We decompose the sextic $\mathcal S$ (see figure \ref{plane
sextic}) into the quartic $Q=l_3+l_4+l_5+l_6$ and the conic
$C=l_1+l_2$.
\begin{theorem}\label{San11}
The rational double cover associated to $\Delta_{12}$,
$\mr{Kum}(B_{12}) \stackrel{\tau}\dashrightarrow Y$ decomposes as
$$\xymatrix{\mr{Kum}(B_{12}) \ar@{-->}[d]^{\tau}\ar[r]^{\varphi}&
T \ar@{->}[d]^{\zeta}\\ Y \ar@{->}[r]^{\phi}& \mathbb P^2}$$ where
$\phi$ is the canonical resolution of the double cover of $\mathbb
P^2$ branched along $\mathcal S$. The maps $\zeta$ and $\varphi$
are the canonical resolutions of the double covers branched along
$Q$ and $\zeta^*(C)$ respectively.
\end{theorem}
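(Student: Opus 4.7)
The plan is to build the surface $T$ and the cover $\varphi$ directly from the branch-locus decomposition $\mathcal{S}=Q+C$, and then to identify the resulting K3 surface with $\mr{Kum}(B_{12})$ via a divisor-class computation on $\mr{Kum}(A)$.

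First I would construct the right column of the diagram. Since $Q=l_3+l_4+l_5+l_6$ has exactly six nodes, at the points $p_{ij}$ with $3\le i<j\le 6$, the canonical resolution $\zeta\colon T\to\mathbb P^2$ of the double cover branched along $Q$ is obtained by blowing up these six points and then taking the double cover along the disjoint strict transforms of the four lines; adjunction gives $K_T=-\zeta^*H$, so $T$ is a weak del Pezzo of degree two. Since each of $l_1,l_2$ meets $Q$ transversely in the four points $p_{kj}$ ($k\in\{1,2\}$, $j\ge 3$), each $\zeta^*(l_k)$ is a smooth elliptic curve, and $\zeta^*(C)=\zeta^*(l_1)+\zeta^*(l_2)$ is a pair of such curves meeting at the two points over $p_{12}$. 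Let $\varphi\colon X\to T$ be the canonical resolution of the double cover along $\zeta^*(C)$; the identity $K_X=\varphi^*(K_T+\tfrac12\zeta^*(C))=0$ together with simple-connectivity shows that $X$ is a K3 surface.

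The heart of the proof is identifying $X$ with $\mr{Kum}(B_{12})$. Using Naruki's relations for the tropes, one computes
$$\phi^*(l_i)=2C_{1i}+\sum_{j\ne i}E_{ij}$$
for each branch line $l_i$, and summing over $i\in\{3,4,5,6\}$ yields
$$\phi^*(Q)=2\Big(C_{13}+C_{14}+C_{15}+C_{16}+\sum_{3\le j<k\le 6}E_{jk}\Big)+\Delta_{12},$$
so $\phi^*(Q)\equiv\Delta_{12}\pmod{2\,\mr{Pic}(\mr{Kum}(A))}$. Hence the double cover of $\mr{Kum}(A)$ branched along $\Delta_{12}$, namely $\mr{Kum}(B_{12})$ by Proposition \ref{fifteen}, coincides, after twisting the defining line bundle by the divisor in parentheses, with the double cover branched along $\phi^*(Q)$. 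The latter is nothing but the normalized fiber product $\mr{Kum}(A)\times_{\mathbb P^2}T$, which is simultaneously a double cover of $T$ branched along $\zeta^*(C)$, because $\zeta^*(\mathcal S)=\zeta^*(Q)+\zeta^*(C)$ and $\zeta^*(Q)$ is already $2$-divisible on $T$. The resulting $\mathbb Z/2\times\mathbb Z/2$-cover picture produces the commutative square and identifies $X\simeq\mr{Kum}(B_{12})$.

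The main obstacle I foresee is the bookkeeping between the three canonical resolutions: $\phi$ requires blowing up all fifteen $p_{ij}$, $\zeta$ only the six with $i,j\ge 3$, and $\varphi$ must handle the eight points over $p_{1j},p_{2j}$ (exactly the indices appearing in $\Delta_{12}$) together with the two points over $p_{12}$. One has to verify point by point that the tower of canonical resolutions on the right column reproduces the exceptional configuration predicted by the fiber-product description, so that the square actually commutes as a diagram of morphisms with $\tau$ rational. At each transverse node of $\mathcal S$ this reduces to the standard local model $s^2=uv$, but patching these local checks consistently, particularly at the eight points where the even-eight curves $E_{1j},E_{2j}\subset\Delta_{12}$ sit, is the principal technical step.
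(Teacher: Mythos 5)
Your proposal is correct in outline, and while the construction of the right-hand column (blow up the six nodes of $Q$, take the double cover, verify $K_T=-\zeta^*H$ so that $T$ is a weak del Pezzo of degree two, then pass to the cover of $T$ branched along $\zeta^*(C)=E_1+E_2$) coincides with the paper's, your identification of the resulting surface with $\mr{Kum}(B_{12})$ takes a genuinely different route. The paper proves that $X$ is a Kummer surface by exhibiting sixteen disjoint smooth rational curves on it (twelve over the exceptional curves of $T$, two exceptional curves of the canonical resolution, and two components of $\varphi^*(W_1)$, $\varphi^*(W_2)$ where $W$ is the conic tangent to the six lines), and then identifies $X$ with $\mr{Kum}(B_{12})$ by composing the two non-symplectic covering involutions into a symplectic involution whose quotient carries an elliptic fibration matching the one of Proposition \ref{fibration} on $Y$; the final recognition of the quotient as $Y$ is left rather implicit. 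You instead compute $\phi^*(l_i)=2C_{1i}+\sum_{j\ne i}E_{ij}$ from Naruki's relations (which is correct: it recovers $L-E_0$ in each case), deduce $\phi^*(Q)\equiv\Delta_{12}\pmod{2\,\mr{Pic}}$, and realize the cover as the normalized fiber product $\mr{Kum}(A)\times_{\mathbb P^2}T$, which is simultaneously the $\Delta_{12}$-cover of $\mr{Kum}(A)$ (unique, since $\mr{Pic}$ of a K3 is torsion-free) and the $\zeta^*(C)$-cover of $T$ (since $\zeta^*(Q)$ is $2$-divisible on $T$). Your route is more direct, makes the commutativity of the square essentially automatic, and renders the separate verification that $X$ is a K3 unnecessary; what it does not buy is the explicit Kummer configuration and the twelve-$I_2$ elliptic fibration on $X$, which the paper's longer argument produces as by-products and which are part of the point of the section. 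The bookkeeping you flag at the points $p_{1j},p_{2j}$ is indeed the only real technical content, and it reduces, as you say, to the local model $z^2=uv$, $w^2=v$, whose normalized fiber product is the smooth cover $(z/w)^2=u$; so I see no gap.
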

\begin{proof}

Let $T_0 \to \mathbb P^2$ be the double cover of $\mathbb P^2$
ramified over the reducible quartic $Q$. Its canonical resolution
induces the diagram $$\xymatrix{T \ar@{->}[d]
\ar[r]\ar[rd]^{\zeta}& T_0 \ar@{->}[d]\\ \tilde{\mathbb P}^2
\ar@{->}[r]& \mathbb P^2}$$ where $\tilde{\mathbb P}^2 \to \mathbb
P^2$ is the the blowup of $\mathbb P^2$ at the six singular points
of $Q$. The surface $T$ is a non-minimal rational surface
containing six disjoint smooth rational curves. Indeed by Hurwitz
formula, the canonical divisor of $T$ is given by

$$K_{T}= \zeta^*(K_{\mathbb
P^2}+\frac{1}{2}(l_3+l_4+l_5+l_6))=-\zeta^*(H)$$where $H$ is a
hyperplane section. Thus $K_T^2=2, H^2 =2$ and $P_2(T)=0$. Denote
by $\tilde{Q}$ the proper transform of $Q$ in $T$. Using the
additivity of the topological Euler characteristic and the Noether
formula, we have that $$e(T)= e(T- \tilde{Q}) + e(\tilde{Q})=10
\Rightarrow \mathcal X(\mathcal O_T)=1 \Rightarrow q(T)=0$$ By
Castelnuovo's rationality criterion, $T$ is a rational surface. In
fact, we show that $T$ is a weak del Pezzo surface of degree two,
i.e. the blow up of $\mathbb P^2$ at seven points with nef
canonical divisor. Indeed we successively blown down the preimages
in $T$ of the four lines $l_3, l_4, l_5$ and $l_6$ as well as the
preimages in $T$ of the three ``diagonals'' of the complete
quadrangle formed by $l_3, l_4, l_5, l_6$. The surface obtained
after these seven blow down is a projective plane.

Consider the following curves of $T$
$$\zeta^*(C)=\zeta^*(l_1+l_2)= E_{1} + E_{2}, \textrm{ where } E_1
\textrm{ and } E_2 \textrm{ are smooth elliptic curves}$$ and
$$\zeta^*(W)=W_1+W_2 \textrm{ where } W_1 \textrm{ and } W_2
\textrm{ are smooth rational curves}$$ with the following
intersection properties:
$$E_i^2=2,  \quad W_i^2=0, \quad E_1 \cdot E_2=2, \quad W_1 \cdot
W_2=4, \quad W_i \cdot E_j=2 \quad \textrm{ for}\quad i\neq j.$$
(recall that $W$ is the plane conic tangent to the six lines $l_1,
\cdots,l_6$). The linear system $|E_1|$ defines an elliptic
fibration on $T$ with six singular fibers of type $I_2$. Take the
double cover branched along the two fibers $E_1 + E_2 \in
2\mr{Pic(T)}$. It induces the canonical resolution commutative
diagram: $$\xymatrix{X \ar@{->}[d]\ar[r] \ar[dr]^{\varphi}& X_0
\ar@{->}[d]\\ \tilde{T} \ar@{->}[r]& T}$$ where $\tilde{T} \to T$
is the blowup of $T$ at the two singular points of $E_1+E_2$.

\textit{Claim:} X is a Kummer surface.

\textit{Proof of the Claim:} Clearly $\mathcal
K_{X}=\varphi^*(\zeta^*(-H)+ \frac{1}{2}(E_1+E_2))=\mathcal O_X$.


1) The pullback by $\varphi$ of the six exceptional curves on $T$ define
twelve disjoint smooth rational curves on $X$.

2) The two exceptional curves of $X$ give two more rational curves
disjoint from 1).

3) Let $\varphi^*(W_1)=W'_1+W''_1$ and $\varphi^*(W_2)=W'_2+W''_2$
and let $\sigma$ be the lift on $X$ of the covering involution of
$\zeta$, then $\sigma(W'_1)=W'_2$ or $\sigma(W'_1)=W''_2$. Without
loss of generality, we can assume that $\sigma(W'_1)=W'_2$ and
hence get the following intersection numbers $$W'^2_i=W''^2_i=-2,
\quad W'_i\cdot W''_i=2 \quad \textrm{for } i=1,2 $$ $$ W'_1\cdot
W'_2=W''_1\cdot W''_2=4 \quad \textrm{and} \quad W'_1\cdot
W''_2=W''_1\cdot W'_2=0.$$ One easily checks that $W'_1$ and
$W''_2$ do not intersect the fourteen curves from 1) and 2).

\noindent In particular, the $K3$ surface $X$ contains sixteen
disjoint smooth rational curves. Consequently $X$ is a Kummer
surface.
 Moreover, the surface $X$ contains an elliptic fibration with twelve $I_2$ fibers. It
 also admits two non symplectic involutions $\theta$ and $\sigma$ where $\theta$ is the covering involution of
 the map $\varphi$ and $\sigma$ is the lift of the covering involution of $\zeta$ on $T$ encountered earlier.
 The composition $\iota = \varphi \circ \sigma$ defines a symplectic involution on $X$ whose quotient is a $K3$
 surface admitting an elliptic fibration with singular fibers identical to the one defined by $F$ on $Y$ in the proposition
 \ref{fibration}.

 In fact, we can now recover sixteen disjoint rational curves on the quotient and conclude that
 it is our original general Kummer surface $Y$ and that $X \simeq \mr{Kum}(B_{12})$.

\end{proof}


\begin{thebibliography}{Meh07}

\bibitem[Bea85]{B3}
Arnaud Beauville.
\newblock Pr\'eliminaires sur les p\'eriodes des surfaces {$K3$}.
\newblock {\em Ast\'erisque}, (126):91--97, 1985.
\newblock Geometry of $K3$ surfaces: moduli and periods (Palaiseau, 1981/1982).

\bibitem[BL04]{BL}
Christina Birkenhake and Herbert Lange.
\newblock {\em Complex abelian varieties}, volume 302 of {\em Grundlehren der
  Mathematischen Wissenschaften [Fundamental Principles of Mathematical
  Sciences]}.
\newblock Springer-Verlag, Berlin, second edition, 2004.

\bibitem[Hud90]{H1}
R.~W. H.~T. Hudson.
\newblock {\em Kummer's quartic surface}.
\newblock Cambridge Mathematical Library. Cambridge University Press,
  Cambridge, 1990.
\newblock With a foreword by W. Barth, Revised reprint of the 1905 original.

\bibitem[Keu97]{Keum2}
Jong~Hae Keum.
\newblock Automorphisms of {J}acobian {K}ummer surfaces.
\newblock {\em Compositio Math.}, 107(3):269--288, 1997.

\bibitem[Meh07]{AM}
Afsaneh Mehran.
\newblock Double covers of {K}ummer surfaces.
\newblock {\em Manuscripta Math.}, 123:205--235, 2007.

\bibitem[Nar91]{Naruki}
Isao Naruki.
\newblock On metamorphosis of {K}ummer surfaces.
\newblock {\em Hokkaido Math. J.}, 20(2):407--415, 1991.

\bibitem[Nik75]{N2}
V.~V. Nikulin.
\newblock Kummer surfaces.
\newblock {\em Izv. Akad. Nauk SSSR Ser. Mat.}, 39(2):278--293, 471, 1975.

\bibitem[Nik79]{N5}
V.~V. Nikulin.
\newblock Finite groups of automorphisms of {K}\"ahlerian {$K3$} surfaces.
\newblock {\em Trudy Moskov. Mat. Obshch.}, 38:75--137, 1979.

\bibitem[P{\v{S}}{\v{S}}71]{torelli}
I.~I. Pjatecki{\u\i}-{\v{S}}apiro and I.~R. {\v{S}}afarevi{\v{c}}.
\newblock Torelli's theorem for algebraic surfaces of type {${\rm K}3$}.
\newblock {\em Izv. Akad. Nauk SSSR Ser. Mat.}, 35:530--572, 1971.

\bibitem[Shi90]{Shioda}
Tetsuji Shioda.
\newblock On the {M}ordell-{W}eil lattices.
\newblock {\em Comment. Math. Univ. St. Paul.}, 39(2):211--240, 1990.

\end{thebibliography}

\end{document}